\newcommand{\rem}[1]{}
\newtheorem{theorem}{Theorem}
\newtheorem{remark}{Remark}
\newtheorem{problem}{Problem}
\newenvironment{proof}[1][Proof]{\begin{trivlist}
\item[\hskip \labelsep {\bfseries #1}]}{\end{trivlist}}
\newcommand{\qed}{\nobreak \ifvmode \relax \else
      \ifdim\lastskip<1.5em \hskip-\lastskip
      \hskip1.5em plus0em minus0.5em \fi \nobreak
      \vrule height0.75em width0.5em depth0.25em\fi}
\newcommand{\mysubeq}[2]{
\begin{subequations}\label{#1}
\begin{align}
#2
\end{align}\end{subequations}}
\begin{document}
%
\title{\vspace{0.25in}{Distributed Barrier Certificates for Safe Operation of Inverter-Based Microgrids}}

\author{\IEEEauthorblockN{Soumya Kundu, Sai Pushpak Nandanoori, Karan Kalsi}
\IEEEauthorblockA{Optimization and Control Group\\
Pacific Northwest National Laboratory, USA\\
Email:\{soumya.kundu,\,saipushpak.n,\,karanjit.kalsi\}@pnnl.gov}
\vspace{-0.35in}
\and
\IEEEauthorblockN{Sijia Geng, Ian A. Hiskens}
\IEEEauthorblockA{Electrical Engineering and Computer Science\\
University of Michigan - Ann Arbor, USA\\
Email:\{sgeng,\,hiskens\}@umich.edu}
\vspace{-0.35in}}


%


\maketitle

\begin{abstract}
Inverter-interfaced microgrids differ from the traditional power systems due to their lack of inertia. Vanishing timescale separation between voltage and frequency dynamics makes it critical that faster-timescale stabilizing control laws also guarantee by-construction the satisfaction of voltage limits during transients. In this article, we apply a barrier functions method to compute distributed active and reactive power setpoint control laws that certify satisfaction of voltage limits during transients. Using sum-of-squares optimization tools, we propose an algorithmic construction of these control laws. Numerical simulations are provided to illustrate the proposed method.

\end{abstract}


%
\IEEEpeerreviewmaketitle

\section{Introduction}

Safety critical system refers to a system for which the violation of safety constraints will lead to serious economic loss or personal casualty. Power system falls into such category considering the loss resulting from large-scale blackout and critical loads such as hospital and process plant. Modern power system has been evolving towards distributed operation. With the increasing integration of distributed energy resources (DERs), especially renewable resources, challenges have arisen in safely operating power systems as well as guaranteeing stability. Microgrid is a promising direction to tackle the intermittency and uncertainty characteristics that are intrinsic in renewable resources such as wind and solar. An islanded microgrid is a standalone small scale power system that groups a variety of DERs, especially renewables, together with energy storages and loads to provide better control and operation, higher efficiency and reliability \cite{pogaku2007modeling}. It is a viable solution for power supply to rural area. Microgrid also provides a new perspective to increase the penetration level of renewables in modern power system. Unlike traditional power system that has large inertia from conventional synchronous generators, DERs in a microgrid are connected to the network through power electronic interface. Considering the stochasticity in renewables and the negligible physical inertia, control of voltage and frequency for microgrids is challenging \cite{mahmoud2014modeling}. While stability analysis and control of inverter-based microgrids have received a lot of attention in the literature \cite{simpson2013synchronization,Schiffer:2014,barklund2008energy,vasquez2013modeling,hiskens2008control}, safety of the microgrids has largely been ignored. For microgrids, the safe region can be defined for voltage magnitudes at every node in the network. The transient voltage in a microgrid can fluctuate by a large amount, causing serious power quality and safety issues, even causing damages to the electrical equipment \cite{kusko2007power,Xu:2018}. Flexible power injections at the droop-controlled inverter nodes can be utilized to stabilize the phase angle, frequency and voltage magnitude, as well as ensuring the voltage magnitudes at all nodes within the safe region. 

Control Lyapunov function (CLFs) have long been used to synthesize stabilizing controllers for nonlinear systems \cite{sontag1989universal}. On the other hand, barrier functions are used to certify safety by guranteeing the forward invariance of a set via Lyapunov-like conditions. Although barrier function originated in the field of optimization as a penalty function to replace constraints, it prospers in the field of control design too. For example, \cite{Prajna:2007} considers the safety verification problem in both worst-case and stochastic settings by constructing barrier certificates. The ideas of the barrier functions and the CLF were combined to construct the control barrier functions (CBFs) \cite{Wieland:2007} which have since been used in designing safety controllers. Reference \cite{xu2018correctness} applied CBF method to automotive systems to achieve lane keeping and adaptive cruise control simultaneously with safety constraints. Application of CBF method to establish set invariance with the existence of disturbance and uncertainty is considered in \cite{gurriet2018towards}. 
Simultaneous satisfaction of safety and performance objectives via design is not a trivial task. The stabilization objective expressed by a CLF and the safety constraints established by a CBF can be potentially in conflict. In \cite{Ames:2017}, the authors proposed a quadratic program formulation that unifies CLF and CBF to synthesize a controller that enforces the safety constraints but relaxes the stability (performance) requirement when these two objectives are in conflict. \cite{Wang:2018} proposed an iterative algorithm using sum-of-squares (SOS) technique to search for the most permissive barrier function that gives maximum volume for the certified region, therefore maximize the estimate for safe stabilization region. All trajectories that start within the safe stabilization region can be made to converge to the (equilibrium of interest at the) origin as well as constrained in the safe region. 

The main contribution of this paper is in applying barrier functions based method to certify safety of an inverter-based microgrid considering transient voltage limits. We propose a distributed safety certification method and present computational algorithms to compute safety-ensuring decentralized and distributed control policies. To treat the control design problem in a decentralized perspective, the microgrid is firstly decomposed into several subsystems. Barrier functions are generated for each subsystem by firstly ignoring the interactions from neighboring subsystems. The interaction terms are considered as disturbances with upper limits in the control design phase, resulting in robust local state feedback control strategies. The rest of the article is organized as follows: Section\,\ref{S:back} presents the necessary background; Section\,\ref{S:model} explains the microgrid model; the main computational and algorithmic developments are described in Section\,\ref{S:algo}, with numerical results presented in Section\,\ref{S:resul}. We conclude the article in Section VI. Throughout the text, we will use $\left|\,\cdot\,\right|$ to denote both the Euclidean norm of a vector and the absolute value of a scalar; and use $\mathbb{R}\left[x\right]$ to denote the ring of all polynomials in $x\in\mathbb{R}^n$.

\vspace{-0.01in}
\section{Background}\label{S:back}
\vspace{-0.01in}

\subsection{Stability Certificates: Lyapunov Functions}

Consider a nonlinear dynamical system of the form 
\begin{align}\label{E:f}
\dot{x}(t) &= f(x(t))~ \forall t \geq 0\,,~x\in\mathbb{R}^n\,,
\end{align}
with an equilibrium at the origin $(f(0)= 0)$, where $f:\mathbb{R}^n \rightarrow \mathbb{R}^n$ is locally Lipschitz. For brevity, we would drop the argument $t$ from the state variables, whenever obvious. The equilibrium point at the origin of \eqref{E:f} is Lyapunov stable if, for every $\varepsilon\!>\!0$ there is a $\delta\!>\!0$ such that $\left\vert x(t)\right\vert\!<\!\varepsilon~\forall t\!\geq\! 0$ whenever $\left\vert x(0)\right\vert\!<\!\delta\,.$ Moreover, it is asymptotically stable in a domain $\mathcal{X}\!\subseteq\!\mathbb{R}^n,\,0\!\in\!\mathcal{X},$ if it is Lyapunov stable and  $\lim_{t\rightarrow\infty}\left\vert x(t)\right\vert \!=\!0\,$ for every $x(0)\!\in\!\mathcal{X}$\,.

\begin{theorem}\label{T:Lyap}
\cite{Lyapunov:1892,Khalil:1996} If there is a continuously differentiable radially unbounded positive definite function $V\!:\!\mathcal{X}\!\rightarrow\!\mathbb{R}_{\geq 0}$ such that $\nabla_x V^T\!f(x)$ is negative definite in $\mathcal{X}$, then the origin of \eqref{E:f} is asymptotically stable and $V(x)$ is a Lyapunov function.
\end{theorem}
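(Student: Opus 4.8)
The plan is to establish the two assertions separately: first Lyapunov stability of the origin, then attractivity (convergence of trajectories), and finally observe that a $V$ with the stated properties is by definition a Lyapunov function. Throughout, the workhorse is the chain rule: since $V$ is continuously differentiable and $f$ is locally Lipschitz, along any solution $x(t)$ we have $\tfrac{d}{dt}V(x(t)) = \nabla_x V^T f(x(t)) =: \dot{V}(x(t))$, and the hypothesis that $\dot{V}$ is negative definite on $\mathcal{X}$ gives in particular $\dot{V}(x) \le 0$ there, so $t \mapsto V(x(t))$ is nonincreasing as long as the trajectory stays in $\mathcal{X}$.

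For Lyapunov stability, fix $\varepsilon > 0$ small enough that the closed ball $\overline{B}_\varepsilon$ lies in $\mathcal{X}$. Let $\alpha := \min_{|x| = \varepsilon} V(x)$, which is strictly positive because $V$ is positive definite and the sphere $\{|x| = \varepsilon\}$ is compact. Define the sublevel set $\Omega := \{x \in \overline{B}_\varepsilon : V(x) < \alpha\}$; by continuity of $V$ and $V(0) = 0$ there is $\delta > 0$ with $B_\delta \subseteq \Omega$. The key step is that $\Omega$ is forward invariant: since $V(x(t))$ is nonincreasing, a trajectory started in $\Omega$ satisfies $V(x(t)) < \alpha$ for all $t \ge 0$, hence can never touch $\{|x| = \varepsilon\}$ where $V \ge \alpha$, so it remains in $B_\varepsilon$. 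This is exactly the $(\varepsilon,\delta)$ condition. Radial unboundedness (together with $\dot V \le 0$) guarantees that sublevel sets of $V$ are bounded, so solutions cannot escape $\mathcal{X}$ in finite time through the boundary of such a set, and the relevant minima are attained on compact sets.

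For attractivity, take $x(0)$ in an invariant sublevel set of $V$ contained in $\mathcal{X}$ (radial unboundedness makes these as large as we wish when $\mathcal{X}=\mathbb{R}^n$). Then $t\mapsto V(x(t))$ is nonincreasing and bounded below by $0$, so it converges to some $c \ge 0$; I claim $c = 0$. Suppose $c > 0$. By positive definiteness there is $r > 0$ with $V(x) < c$ whenever $|x| \le r$, so $x(t)$ stays, for all $t$, in the compact annular region $K := \{r \le |x|\} \cap \{V(x) \le V(x(0))\}$. On $K$ the continuous function $\dot V$ is strictly negative, hence bounded above by some $-\gamma < 0$; integrating yields $V(x(t)) \le V(x(0)) - \gamma t \to -\infty$, contradicting $V \ge 0$. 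Therefore $V(x(t)) \to 0$, and positive definiteness of $V$ forces $|x(t)| \to 0$. Combined with Lyapunov stability, this establishes asymptotic stability of the origin, and $V$ meeting all the stated conditions is by definition a Lyapunov function.

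The only place where genuine care is needed is the compactness bookkeeping: ensuring that the sets on which the constants $\alpha$ and $\gamma$ are extracted (the sphere, and the annulus $K$) are truly compact and that the trajectory stays inside them. This is precisely where radial unboundedness (boundedness of sublevel sets, so trajectories remain confined) and $\dot V \le 0$ (forward invariance of sublevel sets) do the work; once these are in hand, the rest is the chain rule and a one-line integration argument.
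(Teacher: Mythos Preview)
The paper does not actually prove this theorem: it states it with citations to \cite{Lyapunov:1892,Khalil:1996} and moves on, treating it as classical background. Your proposal is the standard textbook argument (essentially the proof in Khalil), and it is correct; there is nothing to compare against in the paper itself.
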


Here $\nabla_x$ denotes the partial differentiation with respect to $x$\,. Using an appropriately scaled Lyapunov function $V(x)$\,, the region-of-attraction (ROA) of the origin of \eqref{E:f} can be estimated by $\left\lbrace x\in\mathcal{X}\left| {V}(x)\leq 1\right.\right\rbrace$ \cite{Genesio:1985,Anghel:2013}.

\subsection{Safety Certificates: Barrier Functions}

In contrast to asymptotic stability which concerns with the convergence of the state variables to the stable equilibrium, the notion of `safety' comes from engineering design specifications. From the design perspective, the system trajectories are not supposed to cross into the certain regions in the state-space marked as `unsafe'. Let us assume that the `unsafe' region of operation for the system \eqref{E:f} is given by the domain 
\begin{align}
\mathcal{X}_u:=\lbrace x\in\mathbb{R}^n\left\vert \,w_i(x)> 0\,,~i=1,2,\dots,l\right.\rbrace
\end{align}
where $w_i:\mathbb{R}^n\mapsto\mathbb{R}$ are a set of $l$ ($\geq 1$) polynomials. These are usually engineering constraints that ensure that the system is always operated (controlled) to avoid going into `unsafe' modes of operation. Safety of such systems can be verified through the existence (or, construction) of continuously differentiable barrier functions $B:\mathbb{R}^n\mapsto\mathbb{R}$ of the form \cite{Prajna:2007,Wieland:2007,Ames:2017,Wang:2018}:
\begin{subequations}\label{E:B}
\begin{align}
B(x)&\geq 0\quad \forall x\in\mathbb{R}^n\backslash\mathcal{X}_u\\
B(x)&<0\quad \forall x\in\mathcal{X}_u\\
 (\nabla_x B)^T\!f(x)+\alpha\left(B(x)\right) &\geq 0\quad \forall x\in\mathbb{R}^n
\end{align}
\end{subequations}
where $\alpha(\cdot)$ is an extended class-$\mathcal{K}$ function\footnote{A continuous function $\alpha:(-a,b)\mapsto(-\infty,\infty)$\,, for some $a,b>0\,,$ is \textit{extendend class-$\mathcal{K}$} if it is strictly increasing and $\alpha(0)=0$ \cite{Khalil:1996}.}. The third condition ensures that at the level-set $B=0$ the value of the barrier function is increasing along the system trajectories. Safety is guaranteed for all trajectories starting inside the domain $\lbrace x\left|\,B(x)\geq 0\right.\rbrace$ which is \textit{invariant} under the dynamics \eqref{E:f}.

\subsection{Sum-of-Squares Optimization}
Relatively recent studies have explored how SOS-based methods can be utilized to find Lyapunov functions by restricting the search space to SOS polynomials \cite{Wloszek:2003,Parrilo:2000,Tan:2006,Anghel:2013}. 
Let us denote by $\mathbb{R}\left[x\right]$ the ring of all polynomials in $x\in\mathbb{R}^n$. 
A multivariate polynomial $p\in\mathbb{R}\left[x\right],~x\in\mathbb{R}^n$, is an SOS if there exist some polynomial functions $h_i(x), i = 1\ldots s$ such that 
$p(x) = \sum_{i=1}^s h_i^2(x)$.
We denote the ring of all SOS polynomials in $x$ by $\Sigma[x]$.
Whether or not a given polynomial is an SOS is a semi-definite problem which can be solved with SOSTOOLS, a MATLAB$^\text{\textregistered}$ toolbox \cite{sostools13}, along with a semi-definite programming solver such as SeDuMi \cite{Sturm:1999}. 
An important result from algebraic geometry, called Putinar's Positivstellensatz theorem \cite{Putinar:1993,Lasserre:2009}, helps in translating conditions such as in \eqref{E:B} into SOS feasibility problems. 
\begin{theorem}\label{T:Putinar}
Let $\mathcal{K}\!\!=\! \left\lbrace x\in\mathbb{R}^n\left\vert\, k_1(x) \geq 0\,, \dots , k_m(x)\geq 0\!\right.\right\rbrace$ be a compact set, where $k_j$ are polynomials. Define $k_0=1\,.$ Suppose there exists a $\mu\!\in\! \left\lbrace {\sum}_{j=0}^m\sigma_jk_j \left\vert\, \sigma_j \!\in\! \Sigma[x]\,\forall j \right. \right\rbrace$ such that $\left\lbrace \left. x\in\mathbb{R}^n \right\vert\, \mu(x)\geq 0 \right\rbrace$ is compact. Then,  
\begin{align*}
p(x)\!>\!0~\forall x\!\in\!\mathcal{K}
\!\implies\! p \!\in\! \left\lbrace {\sum}_{j=0}^m\sigma_jk_j \left\vert\, \sigma_j \!\in\! \Sigma[x]\,\forall j \right. \right\rbrace\!.
\end{align*}
\end{theorem}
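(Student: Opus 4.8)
The plan is to prove the theorem by a duality (separation) argument. Write $M:=\{\sum_{j=0}^m\sigma_jk_j\mid\sigma_j\in\Sigma[x]\}$ for the quadratic module in question, regarded as a convex cone in the real vector space $\mathbb{R}[x]$; the goal is to show that a polynomial $p$ with $p>0$ on $\mathcal{K}$ cannot be separated from $M$, so that $p\in M$. The first move is to upgrade the stated hypothesis to the \emph{Archimedean} property of $M$: from the existence of $\mu\in M$ with $\{\mu\geq0\}$ compact one deduces, by a standard reduction, that $N-\sum_{i=1}^n x_i^2\in M$ for some $N>0$, and hence that for every $q\in\mathbb{R}[x]$ there is $N_q>0$ with $N_q\pm q\in M$. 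Two consequences are recorded: $M-M=\mathbb{R}[x]$, and $1$ is an internal point of $M$ (indeed $1\pm N_q^{-1}q\in M$ for all $q$).

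The core step is the claim that $b\geq0$ on $\mathcal{K}$ implies $b\in\overline{M}$, the closure of $M$ in the finest locally convex topology on $\mathbb{R}[x]$. Arguing by contradiction, the Hahn--Banach separation theorem yields a linear functional $L:\mathbb{R}[x]\to\mathbb{R}$ with $L\geq0$ on $M$ and $L(b)<0$; since $1\in M$ and $M-M=\mathbb{R}[x]$, the Archimedean bounds force $L(1)>0$, so we normalize $L(1)=1$. Because $\Sigma[x]\subseteq M$ we have $L(h^2)\geq0$ for all $h$, so $\langle f,g\rangle:=L(fg)$ is a positive semidefinite symmetric form; let $H$ be the associated GNS Hilbert space, i.e.\ the completion of the quotient of $\mathbb{R}[x]$ by $\{f:L(f^2)=0\}$. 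Multiplication by $x_i$ induces commuting symmetric operators $X_i$ on the dense image of $\mathbb{R}[x]$ in $H$, and $N-\sum_i x_i^2\in M$ gives $\langle(N-\sum_i X_i^2)f,f\rangle=L((N-\sum_i x_i^2)f^2)\geq0$, so the $X_i$ are \emph{bounded} and extend to commuting bounded self-adjoint operators. The spectral theorem for this commuting family furnishes a projection-valued measure $E$ on the joint spectrum $\Lambda\subseteq\mathbb{R}^n$; moreover $\langle k_j(X)f,f\rangle=L(k_jf^2)\geq0$ (since $k_j\,\Sigma[x]\subseteq M$) forces $k_j(X)$ to be positive semidefinite, hence $k_j\geq0$ on $\Lambda$ for every $j$, i.e.\ $\Lambda\subseteq\mathcal{K}$. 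With $\mathbf{1}$ the image of the constant polynomial, one obtains $L(q)=\langle q(X)\mathbf{1},\mathbf{1}\rangle=\int_{\mathcal{K}}q\,d\nu$, where $\nu(\cdot):=\langle E(\cdot)\mathbf{1},\mathbf{1}\rangle$ is a Borel probability measure on $\mathcal{K}$. Then $L(b)=\int_{\mathcal{K}}b\,d\nu\geq0$, contradicting $L(b)<0$, so $b\in\overline{M}$.

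To conclude, $p$ is continuous and strictly positive on the compact set $\mathcal{K}$, so $p-\varepsilon\geq0$ on $\mathcal{K}$ for some $\varepsilon>0$, whence $p-\varepsilon\in\overline{M}$ by the core step; since $\varepsilon\cdot1\in M$ and $1$ is an internal point of $M$, the elementary fact that $\overline{M}$ plus an internal point of the convex cone $M$ lies in $M$ gives $p=(p-\varepsilon)+\varepsilon\cdot1\in M$. I expect the crux to be the core step, and within it the verification that the multiplication operators $X_i$ are bounded: this is precisely where compactness of $\mathcal{K}$, encoded in the Archimedean hypothesis, is indispensable, and it is the reason this quadratic-module statement must assume it explicitly (the analogous statement for preorderings, due to Schm\"udgen, requires only that $\mathcal{K}$ be compact). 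The Archimedean reduction of the first step and the passage from $\overline{M}$ to $M$ in the last step are comparatively routine.
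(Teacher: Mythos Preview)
Your proof sketch is essentially correct and follows the standard functional-analytic route to Putinar's Positivstellensatz: establish that the quadratic module $M$ is Archimedean, separate a hypothetical $p\notin\overline{M}$ via Hahn--Banach, run the GNS construction to obtain commuting bounded self-adjoint multiplication operators, apply the spectral theorem to get a representing measure supported on $\mathcal{K}$, and finally pass from $\overline{M}$ to $M$ using that $1$ is an internal point. This is the classical argument found in Putinar's original paper and in the references cited.

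However, there is nothing to compare against: the paper does not supply its own proof of this theorem. Theorem~\ref{T:Putinar} is quoted as a known result from the literature (with citations to Putinar and Lasserre) and is used only as a tool to justify the translation of positivity constraints into sum-of-squares feasibility problems (cf.\ Remark~\ref{R:Putinar}). The only theorem actually proved in the paper is the invariance result for the distributed barrier certificates.

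One small caution on your sketch: the ``standard reduction'' from the hypothesis (existence of $\mu\in M$ with $\{\mu\geq 0\}$ compact) to the Archimedean condition $N-\sum_i x_i^2\in M$ is not entirely routine. It typically proceeds by first showing that the preordering $\Sigma[x]+\mu\,\Sigma[x]$ generated by the single polynomial $\mu$ is Archimedean (e.g.\ via Schm\"udgen's theorem in one constraint, or a direct resolvent-type argument), and then using $\Sigma[x]+\mu\,\Sigma[x]\subseteq M$. You are right that this step is where the compactness hypothesis is essential, but it deserves more than a parenthetical in a full proof.
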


\begin{remark}\label{R:Putinar}
Using Theorem\,\ref{T:Putinar}, one can translate the problem of checking that $p\!>\!0$ on $\mathcal{K}$ into an SOS feasibility problem where we seek the SOS polynomials $\sigma_0\,,\,\sigma_j\,\forall j$ such that $p\!-\!\sum_j\sigma_j k_j$ is {SOS.
Note that any} equality constraint $k_i(x)\!=\!0$ can be expressed as two inequalities $k_i(x)\!\geq 0$ and $k_i(x)\!\leq\! 0$. In many cases, especially for the $k_i\,\forall i$ used throughout this work, a $\mu$ satisfying the conditions in Theorem\,\ref{T:Putinar} is guaranteed to exist (see \cite{Lasserre:2009}), and need not be searched for.
\end{remark}

\section{Microgrid Model}\label{S:model}

We consider the following model of droop-controlled inverter dynamics \cite{Coelho:2002,Schiffer:2014}:
\mysubeq{E:droop}{
\dot{\theta}_i & = \omega_i\,,\\
\tau_i\dot{\omega}_i & = -\omega_i + \lambda_i^p \left(P_i^{\text{set}}-P_i\right)\\
\tau_i\dot{v}_i & = v_i^0-v_i + \lambda_i^q \left(Q_i^{\text{set}}-Q_i\right)
}
where $\lambda_i^p>0$ and $\lambda_i^q>0$ are the droop-coefficients associated with the active power vs. frequency and the reactive power vs. voltage droop curves, respectively; $\tau_i$ is the time-constant of a low-pass filter used for the active and reactive power measurements; $\theta_i\,,\,\omega_i$ and $v_i$ are, respectively, the phase angle, speed and voltage magnitude; $v^0_i$ is the desired (nomial) voltage magnitude; $P_i^{\text{set}}$ and $Q^{\text{set}}_i$ are the active power and reactive power set-points, respectively. Finally, $P_i$ and $Q_i$ are, respectively, the active and reactive power injected into the network which relate to the neighboring bus voltage phase angle and magnitudes as:
\mysubeq{E:PQ}{
P_i &= v_i{\sum}_{k\in\mathcal{N}_i} v_k\left(G_{i,k}\cos\theta_{i,k} + B_{i,k}\sin\theta_{i,k}\right)\\
Q_i &= v_i{\sum}_{k\in\mathcal{N}_i} v_k\left(G_{i,k}\sin\theta_{i,k} - B_{i,k}\cos\theta_{i,k}\right)
}
where $\theta_{i,k}=\theta_i-\theta_k$\,, and $\mathcal{N}_i$ is the set of neighbor nodes. $G_{i,k}$ and $B_{i,k}$ are respectively the transfer conductance and susceptance values of the line connecting the nodes $i$ and $k$\,. 

At the equilibrium (steady-state) operation:
\begin{align*}
\forall i:\quad P_i=P_i^{\text{set}},\,Q_i=Q_i^{\text{set}},\,\omega_i=0,\,v_i=v_i^0\,.
\end{align*} 
As the conditions on the network change (such as changes in load or generation), inverters have the capability to change the control set-points of the active and reactive power output to adjust to the new operating conditions. This is modeled as:
\begin{align}
P_i^{\text{set}} = P^0_i+u^p_i\,,~Q_i^{\text{set}} = Q^0_i+u^q_i\,,
\end{align}
where $P^0_i$ and $Q^0_i$ are the set-points for the unperturbed (or nominal) operating condition; and $u^p_i$ and $u^q_i$ are some feedback control inputs. 

Due to the low-inertia of the microgrids, large voltage and frequency fluctuations are quite common during transients \cite{Xu:2018}. While designing stabilizing control policies, it is therefore important to keep track of the transient voltage and frequency magnitudes to ensure that those are within the `safety' limits determined via engineering design. In this work, we will restrict ourselves to the consideration of transient voltage limits which are usually higher than the steady-state operational limits \cite{kusko2007power}. Fluctuations of transient voltage limits beyond the tolerable (`safe') region cause power quality issues, including the risk of damaging the electrical equipment. In this paper, we will define the `safe' operational region as:
\begin{align*}
\underline{v_i}\leq v_i(t)\leq \overline{v_i}\,.
\end{align*}
Typical values for the limits during transients operation could be $\underline{v_i}=0.6$\,p.u. and  $\overline{v_i}=1.2$\,p.u. Other forms of safety constraints, such as frequency limits and power-flow limits will be considered in the future work.

\section{Distributed Safety Certificates}\label{S:algo}

The dynamical model of the interconnected microgrid with $m$ droop-controlled inverters is expressed compactly as:
\mysubeq{E:network}{
\dot{x}_i&=f_i(x_i)+g_i(x_i)u_i + {\sum}_{j\in\mathcal{N}_i}h_{ij}(x_i,x_j)\,,\\
\mathcal{X}_{u,i}&:=\lbrace x_i\,|\,w_j(x_i)\geq 0\,,~j=1,2,\dots,l_i\rbrace
}
where each $i\in\lbrace 1,2,\dots,m\rbrace$ identifies an inverter. $x_i\in\mathbb{R}^{n_i}$ is the $n_i$-dimensional state vector associated with the $i$-th inverter, while $u_i$ are some control inputs. We assume that the origin is an equilibrium point of interest of the networked system, and that the control input vanishes at the equilibrium point (i.e. $u_i=0\,\forall i$ at the origin). We also assume that $h_{ij}(x_i,0)=0$ for all $x_i$\,. Moreover $f_i,\,g_i$ and $h_{ij}$ are locally Lipschitz functions. The problem we are interested in is:
\begin{problem}
Identify continuous functions $B_i(x_i)$\,, feedback control policies $u_i$ and non-negative scalars $c_i$ such that 
\mysubeq{E:conditions}{
\forall i:~&B_i(0)>c_i\label{E:inclusion}\\
&B_i(x_i)<0\quad \forall x_i\in\mathcal{X}_{u,i}\label{E:safety}\\
 &\dot{B}_i \geq 0\quad \forall x_i\in\partial\mathcal{D}_i[c_i],\,\forall x_j\in\mathcal{D}_j[c_j]~\forall j\in\mathcal{N}_i\label{E:derivative}\\
 &\dot{B}_i=\nabla_{x_i}B_i^T(f_i(x_i)+g_i(x_i)u_i + {\sum}_{j\in\mathcal{N}_i}h_{ij}(x_i,x_j))\,.\notag
}
where we define $\mathcal{D}_i[c_i]:=\lbrace x_i\,|\,B_i(x_i)\geq c_i\rbrace~\forall i$ and $\partial\mathcal{D}_i[c_i]:=\lbrace x_i\,|\,B_i(x_i)=c_i\rbrace$ as the boundary set of the domain $\mathcal{D}_i[c_i]$\,.
\end{problem}

\begin{theorem}
If there exist continuous functions $B_i(x_i)$\,, feedback control policies $u_i$ and non-negative scalars $c_i$ satisfying \eqref{E:conditions}, then the safety of the interconnected system \eqref{E:network} is guaranteed for all $t\geq 0$ whenever $B_i(x_i(0))\geq c_i\,\forall i$\,, i.e.
\begin{align*}
x_i(0)\in\mathcal{D}_i[c_i]~\forall i\implies x_i(t)\in\mathbb{R}^{n_i}\backslash\mathcal{X}_{u,i}~\forall i\,\forall t\geq 0\,.
\end{align*}
Moreover there is a neighborhood $\mathcal{X}_i$ around origin (i.e. $0\in\mathcal{X}_i\,\forall i$) such that $\mathcal{X}_i\subseteq\mathcal{D}_i[c_i]$\,.
\end{theorem}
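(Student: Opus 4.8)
The plan is to prove that the product set $\mathcal{D}[c]:=\prod_{i=1}^m\mathcal{D}_i[c_i]$ is forward invariant under the closed-loop dynamics \eqref{E:network}; both claims of the theorem then follow at once. I take it as implicit in the problem setup that the closed-loop vector field is locally Lipschitz, so that each initial condition yields a unique solution on a maximal interval $[0,T_{\max})$ (if each $\mathcal{D}_i[c_i]$ is bounded, trapping the trajectory inside $\mathcal{D}[c]$ forces $T_{\max}=\infty$).

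\emph{Step 1: reduction, and the neighborhood claim.} Since $c_i\ge 0$, every $x_i\in\mathcal{D}_i[c_i]$ satisfies $B_i(x_i)\ge c_i\ge 0$, whereas $B_i<0$ on $\mathcal{X}_{u,i}$ by \eqref{E:safety}; hence $\mathcal{D}_i[c_i]\subseteq\mathbb{R}^{n_i}\setminus\mathcal{X}_{u,i}$, and it suffices to show that $x_i(0)\in\mathcal{D}_i[c_i]\ \forall i$ forces $x_i(t)\in\mathcal{D}_i[c_i]\ \forall i$ for all $t$. The neighborhood statement is then a direct consequence of continuity: by \eqref{E:inclusion}, $B_i(0)>c_i$, so $\mathcal{X}_i:=\{x_i\,|\,B_i(x_i)>c_i\}$ is an open set containing the origin with $\mathcal{X}_i\subseteq\{x_i\,|\,B_i(x_i)\ge c_i\}=\mathcal{D}_i[c_i]$.

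\emph{Step 2: forward invariance.} Condition \eqref{E:derivative} is a Nagumo-type subtangency condition for $\mathcal{D}_i[c_i]$ evaluated along the coupled flow, and I would use it by contradiction. Suppose some solution with $x_i(0)\in\mathcal{D}_i[c_i]\ \forall i$ eventually leaves $\mathcal{D}[c]$, and put $T:=\sup\{\tau\ge 0\,|\,x_i(t)\in\mathcal{D}_i[c_i]\ \forall i,\ \forall t\in[0,\tau]\}$, which is finite. Continuity of the trajectories and closedness of the sets $\mathcal{D}_i[c_i]$ give $x_i(T)\in\mathcal{D}_i[c_i]$ for all $i$; moreover some index $i^\star$ must have $B_{i^\star}(x_{i^\star}(T))=c_{i^\star}$, i.e. $x_{i^\star}(T)\in\partial\mathcal{D}_{i^\star}[c_{i^\star}]$—otherwise every $B_i(x_i(T))>c_i$ and, by continuity, the trajectory would stay in $\mathcal{D}[c]$ past $T$, contradicting the definition of $T$. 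Since also $x_j(T)\in\mathcal{D}_j[c_j]$ for every $j\in\mathcal{N}_{i^\star}$, condition \eqref{E:derivative} applies at $x(T)$ and yields $\dot B_{i^\star}(x(T))\ge 0$. Carrying out the same reasoning at every state of $\partial\mathcal{D}[c]$—that is, wherever some constraint $B_i(x_i)=c_i$ is active while the rest hold—shows the closed-loop field is subtangent to $\mathcal{D}[c]$ along all of $\partial\mathcal{D}[c]$; a standard Nagumo set-invariance argument (legitimate because solutions are unique) then forbids any trajectory starting in $\mathcal{D}[c]$ from exiting it, contradicting the choice of $T$. Therefore $x_i(t)\in\mathcal{D}_i[c_i]\subseteq\mathbb{R}^{n_i}\setminus\mathcal{X}_{u,i}$ for all $i$ and all $t\ge 0$, which is the asserted safety guarantee.

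\emph{Step 3: the main obstacle.} The set-theoretic bookkeeping is routine; the subtle point is the invariance step. Because \eqref{E:derivative} gives only $\dot B_{i^\star}\ge 0$ (not a strict inequality), a single-time-instant argument does not by itself exclude the trajectory grazing $\partial\mathcal{D}[c]$ and slipping out with vanishing derivative; ruling this out is precisely the content of Nagumo's theorem and genuinely relies on uniqueness of solutions (classical pathologies such as $\dot y=-3\,y^{2/3}$ show the hypothesis cannot be dropped). A further wrinkle, peculiar to the distributed setting, is that $\dot B_{i^\star}$ involves the neighbor states, so the argument must be run on the full product set $\mathcal{D}[c]$ and crucially uses that at the first candidate exit time every neighbor still lies in its own $\mathcal{D}_j[c_j]$—which is exactly the situation under which \eqref{E:derivative} is assumed to hold.
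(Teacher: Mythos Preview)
Your proposal is correct and follows essentially the same route as the paper: both argue that the product set $\mathcal{D}_1[c_1]\times\cdots\times\mathcal{D}_m[c_m]$ is forward invariant because on its boundary some $B_{i^\star}=c_{i^\star}$ with all neighbors still in their own $\mathcal{D}_j[c_j]$, so \eqref{E:derivative} forces $\dot B_{i^\star}\ge 0$; safety then follows from $\mathcal{D}_i[c_i]\subseteq\mathbb{R}^{n_i}\setminus\mathcal{X}_{u,i}$, and the neighborhood claim from continuity of $B_i$ together with $B_i(0)>c_i$. Your write-up is in fact more careful than the paper's, which simply asserts invariance from the non-decrease condition without naming Nagumo's theorem or the uniqueness-of-solutions caveat you rightly flag in Step~3.
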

\begin{proof}
Note that because of the condition \eqref{E:derivative}, $B_i$ is non-decreasing on the boundary of the domain $\mathcal{D}_i[c_i]$ whenever $x_j\in\mathcal{D}_j[c_j]$ for every neighbor $j$\,. Extending this argument to all the subsystems, we conclude that 
\begin{align*}
\mathcal{D}_1[c_1]\times\mathcal{D}_2[c_2]\times\dots\times\mathcal{D}_m[c_m]
\end{align*}
is an invariant domain. Since $B_i(x_i)<0$ for every $x_i\in\mathcal{X}_{u,i}$, we conclude the safety of the system is guaranteed for all $t\geq 0$ whenever $x_i(0)\in\mathcal{D}_i[c_i]~\forall i$\,. Finally, since $B_i(0)>c_i$ and $B_i$ is a continuous function there exists a neighborhood $\mathcal{X}_i$ around origin such that for all $x_i\in\mathcal{X}_i$\,, $B_i(x_i)\geq c_i$\,.
\hfill\qed\end{proof}

Computation of such barrier functions is not trivial. Recent works have explored the use of sum-of-square optimization methods to compute the barrier certificates for polynomials networks \cite{Prajna:2007,Wang:2018}\,. Note that the power-flows as described in \eqref{E:PQ} are non-polynomial. Using the polynomial recasting technique proposed in \cite{Anghel:2013}\,, the power systems dynamics can be expressed in a higher-dimensional space as a polynomial differential-algebraic system. In this work, however, we resort to Taylor series expansion (up to third order) to approximate the dynamics into a polynomial form. 

In the rest of this section, we describe a three-step procedure to obtain the distributed barrier certificates. In the first two steps, we consider the isolated and autonomous sub-system model of the form (which we assume to be locally asymptotically stable around the origin):
\begin{align*}
\text{(isolated)}\quad \dot{x}_i=f_i(x_i)\,,
\end{align*}
and compute the Lyapunov function which is then used to compute a barrier function for the isolated sub-system using the method similar to \cite{Wang:2018}.

\subsection{Computation of Lyapunov Functions}
SOS-based expanding interior algorithm \cite{Wloszek:2003,Anghel:2013} has been used to construct Lyapunov functions and region-of-attraction in an iterative search process. In this work, we use a variant of the process - which does not require the bisection search process and hence speeds up the computation at each iteration stage. The algorithmic steps used to implement the modified expanding interior algorithm can be summarized as follows (for notational convenience, we have dropped the sub-script $i$ from the subsystem variables to explain the algorithm):
\begin{enumerate}
\item Step 0: Compute a Lyapunov function $V^0$ such that $V^0\leq 1$ is an estimate of the ROA. The following two steps are then repeated until convergence, such that (hopefully) the final estimate of the ROA is much larger than initial one. Define a positive definite and radially unbounded function $p(x)$ (e.g. $p=\varepsilon|x|^2$ for some small $\varepsilon>0$).
\item Step k-1: Starting from a Lyapunov function $\hat{V}$ with ROA estimated by $\hat{V}\leq 1$\,, compute the largest level-set $\beta^k$ of a positive definite function $p(x)$ contained within $\hat{V}\leq 1$\,. This is done by solving the following SOS problem:
\mysubeq{E:LF_step1}{
\max_{s_2^k,s_3^k,s_4^k}\quad\beta^k\\
\hat{s_1}(p-\beta^k) - s_2^k(\hat{V}-1)\in\Sigma[x]\\
-s_3^k(1-\hat{V})-s_4^k\dot{\hat{V}}-\varepsilon_2|x|^2\in\Sigma[x]
}
where $s$ are SOS polynomials. The $\hat{\cdot}$ implies it is borrowed from the previous step, while $k$ denotes the variables being currently computed. At the first instance of the problem \eqref{E:LF_step1}, we initialize $\hat{s_1}=1$\,.
\item Step k-2: In this sub-step at the k-th iteration, a new Lyapunov function $V^k$ is found such that the level-set $V^k= 1$ is an estimate of the ROA, while trying to expand the estimated ROA by maximizing $\delta$ such that $p\leq\hat{\beta}$ is contained within the level-set $V^k\leq 1-\delta^k$, i.e.
\mysubeq{}{
\max_{V^k,s_1^k}\quad\delta^k\\
V^k-\varepsilon_1|x|^2\in\Sigma[x]\\
s_1^k(p-\hat{\beta}) - \hat{s_2}({V}^k-1+\delta^k)\in\Sigma[x]\\
-\hat{s_3}(1-{V}^k)-\hat{s_4}\dot{{V}}^k-\varepsilon_2|x|^2\in\Sigma[x]
}
\end{enumerate}
The algorithm stops when $\delta^k$ is sufficiently small. Set $V=V^k$ as the Lyapunov function with $V\leq 1$ providing the largest estimate of the ROA\,.

\subsection{Computation of Barrier Functions}
For the barrier functions computation we adopt a similar approach as in the Algorithm\,2 in \cite{Wang:2018}, except that we use the algorithm to compute only the barrier functions, while the original algorithm was used to also search for a `safe' and stabilizing control policy. For completeness we present the algorithm here (once more, for notational convenience, we have dropped the sub-script $i$ from the subsystem variables to explain the algorithm):
\begin{enumerate}
\item Step 0: As the first step of the iterative process, we compute the maximum level-set of $V$ contained completely inside the \textit{safe region}. This is done by solving the following SOS problem:
\begin{align*}
\max_{s_0^k}~z\,,~\,\text{s.t.}~V-z-\sum_{i=1}^ls_{0,i}^kw_i\in\Sigma[x]\,.
\end{align*}

Set $B^0=z^{\max}-V$\,, where $z^{\max}$ is the solution of the above problem, i.e. the maximal level-set of $V$ wholly contained inside the safe region. Note that $B^0$ is a barrier function by construction. Choose a small scalar $\gamma\!>\!0$\,.
\item Step k-1: Using the barrier function $\hat{B}$ computed in the previous step, find the largest $\varepsilon\!>\!0$ such that $\dot{\hat{B}}\geq-\gamma \hat{B}+\varepsilon$ whenever $\hat{B}\geq 0$\,, i.e. solve the SOS problem
\begin{align*}
\max_{s_1^k}~\varepsilon^k,~\,\text{s.t.}~
\dot{\hat{B}}+\gamma\,\hat{B}-\varepsilon^k-s_1^k\,\hat{B}\in\Sigma
\end{align*}
\item Step k-2: In this sub-step we search for a new barrier function of the form $B^k(x)=z(x)^TQ^kz(x)$ where $z(x)$ is a vector of monomials in $x$\,, and $Q^k$ is a symmetric matrix, such that $B^k(0)>0$\,. The barrier function satisfies $B^k(x)<0$ on the unsafe set $\lbrace x\,|\,w_i(x)> 0\,,\,i=1,2,\dots,l\rbrace$\,, along with the constraint on its time-derivative. The following problem is solved:
\begin{align*}
\max_{s_{2,i}^k}\quad\text{trace}\,(Q^k)\\
\dot{B}^k+\gamma\,B^k-\eta-\hat{s_1}\,B^k\in\Sigma[x]\\
-B^k -{\sum}_{i=1}^ls_{2,i}^kw_i\in\Sigma[x]
\end{align*}
where $\eta$ is a small positive number chosen to avoid the trivial zero solution. The objective function is a proxy for maximizing the volume of the safety region \cite{Wang:2018}.
\end{enumerate}
The algorithm stops when $\text{trace} (Q^k)$ converges within some tolerance. Set $B=B^k$ as the barrier function with $B\geq 0$ providing the largest estimate of the certified safety region for the isolated subsystems.

\subsection{Safety Certifying Control Policies}
In this subsection, we describe an SOS problem to compute control policies $u_i$ such that \eqref{E:derivative} is satisfied for some $c_i\in[0,B_i(0))$\,. Without any loss of generality, we will assume that the barrier functions satisfy $B_i(0)=1$ (always achievable through scaling), such that we are interested in $c_i\in[0,1)$\,. Moreover, we will assume, for simplicity, uniform $c_i=c~\forall i$\,, while the more generic case can be easily extended. Then we are seeking the existence of control laws $u_i$ such that for some chosen $c\in[0,1)$ 
\mysubeq{E:control}{
\forall i:~&\forall x_i\in\partial\mathcal{D}_i[c]\,,\,\forall x_j\in\mathcal{D}_j[c],\,j\in\mathcal{N}_i\\
&\nabla_{x_i}B_i^T(f_i+g_iu_i+{\sum}_{j\in\mathcal{N}_i}h_{ij})\geq 0~
}
In this paper, we will focus on state-feedback control policies. Two alternatives will be considered: 1) a decentralized state-feedback policy of the form $u_i(x_i)$\,, and 2) a distributed state-feedback policy of the form $u_i=u_{ii}(x_i)+{\sum}_{j\in\mathcal{N}_i}u_{ij}(x_j)$\,.

The following problem concerns the design of an optimal decentralized state-feedback control policy $u_i(x_i)$:
\mysubeq{E:control_sos}{
&\qquad\min_{u_i(x_i)}\quad U_i\\
&\text{s.t.}\quad \text{\eqref{E:control} and }~\|u_i(x_i)\|_\infty\leq U_i\,~\forall x_i\in\mathcal{D}_i[c]\,.
}
Note that the controller is only used on or near the boundary of the domain $\mathcal{D}_i[c]$ since it is only needed to guarantee that the trajectories never cross the boundary. This can be solved using an equivalent SOS problem, noting that the constraint $\|u_i(x_i)\|_\infty\leq U_i$ translates to polynomial constraints. Similar problem can be formulated for the distributed controller design, with the constraint $\|u_i(x_i)\|_\infty\leq U_i$ needed to be satisfied on $x_j\in\mathcal{D}_j[c]\,\forall j\in\mathcal{N}_i$ as well as $x_i\in\mathcal{D}_i[c]\,$.

\begin{remark}
Note that the constraint \eqref{E:control} is satisfied whenever (sufficient condition) we choose a $u_i$ such that 
\begin{align*}
\nabla B_i^Tg_iu_i\geq \mu:=\max_{x_i\in\partial{D}_i[c],x_j\in\mathcal{D}_j[c]}\left|{\sum}_{j\in\mathcal{N}_i}\nabla B_i^Th_{ij}\right|
\end{align*}
If for every $x_i\in\partial{D}_i[c]$ there always exists a $k$ such that $\left|\left[\nabla B_i^T\right]_k\left[u_i\right]_k\right|>0$ then we can always find a $u_i$ satisfying the above condition. 
\end{remark}

\section{Numerical Results}\label{S:resul}
%
%
For illustration purpose, we consider the 6-bus (bus 0 to bus 5) microgrid network described in \cite{ersal2011impact}\,. Disconnecting the utility, we replace the substation (bus 0) by a droop-controlled inverter, with three other inverters placed on buses 1, 4 and 5\,. The inverter dynamics were modeled in the form of \eqref{E:droop}. Bus 0 was considered as the reference bus for the phase angle. 
The droop coefficients $\lambda^p_i$ and $\lambda^q_i$ were chosen to be $2.43\,$rad/s/p.u. and $0.2$\,p.u./p.u., while the filter time-constant $\tau_i$ was set to 0.5\,s \cite{Schiffer:2014}. Nominal values of voltage and frequency, as well as the active and reactive power set-points were obtained by solving the steady-state power-flow equations \eqref{E:PQ}, which were then used to shift the equilibrium point to the origin. The loads were modeled as constant power loads, and Kron reduced network with only the inverter nodes were used for analysis.
The unsafe set was defined in terms of the shifted (around the 1\,p.u.) nodal voltage magnitudes as follows:
\begin{align*}
\text{(unsafe)}\quad v_i< -0.4\,\text{p.u.}~\text{ or }~\,v_i> 0.2\,\text{p.u.}
\end{align*}
\begin{figure}[thpb]
\centering
\vspace{-0.2in}
\includegraphics[scale=0.45]{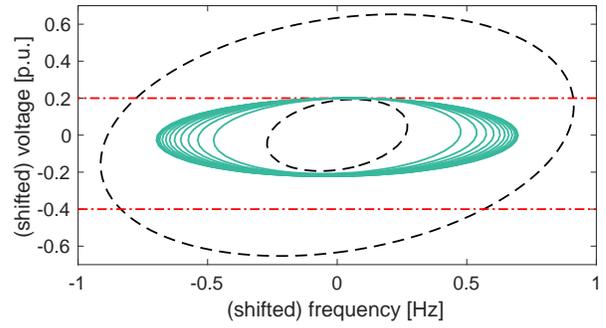}
\caption{Illustration of the iterative search for a barrier certified region of an isolated inverter subsystem. Red dashed line mark the boundary of the unsafe region. The outer black dashed lines mark the estimated ROA, while the inner black dashed line marks the largest Lyapunov functions level-set contained within the safe region. Green lines mark the iterative (growing) estimates of the certified safe region using barrier function.}
\label{F:inv4_barrier}
\end{figure}
In Fig.\,\ref{F:inv4_barrier}, we illustrate how the iterative search algorithm presented in Section\,\ref{S:algo} obtains an expanded certified region of safety (marked by the boundary of the outermost green ellipse) starting from the initial estimate given by a level-set of the Lyapunov function (marked by the smaller black dashed ellipse boundary). The plot shows the projections of the ROA and the barrier certified regions on the frequency-voltage space obtained by setting the phase angle differences to 0. The black dash marked boundary of larger ellipse is the estimate of the ROA, while the red dashed lines denote the unsafe region boundary (in voltage magnitudes). Note that the certified invariant region of safe stability is much smaller than the estimated ROA of the isolated inverter.
%
%
%
%
%
\begin{figure*}[thpb]
\centering
\hspace{-0.3in}
\subfigure[]{
\includegraphics[scale=0.32]{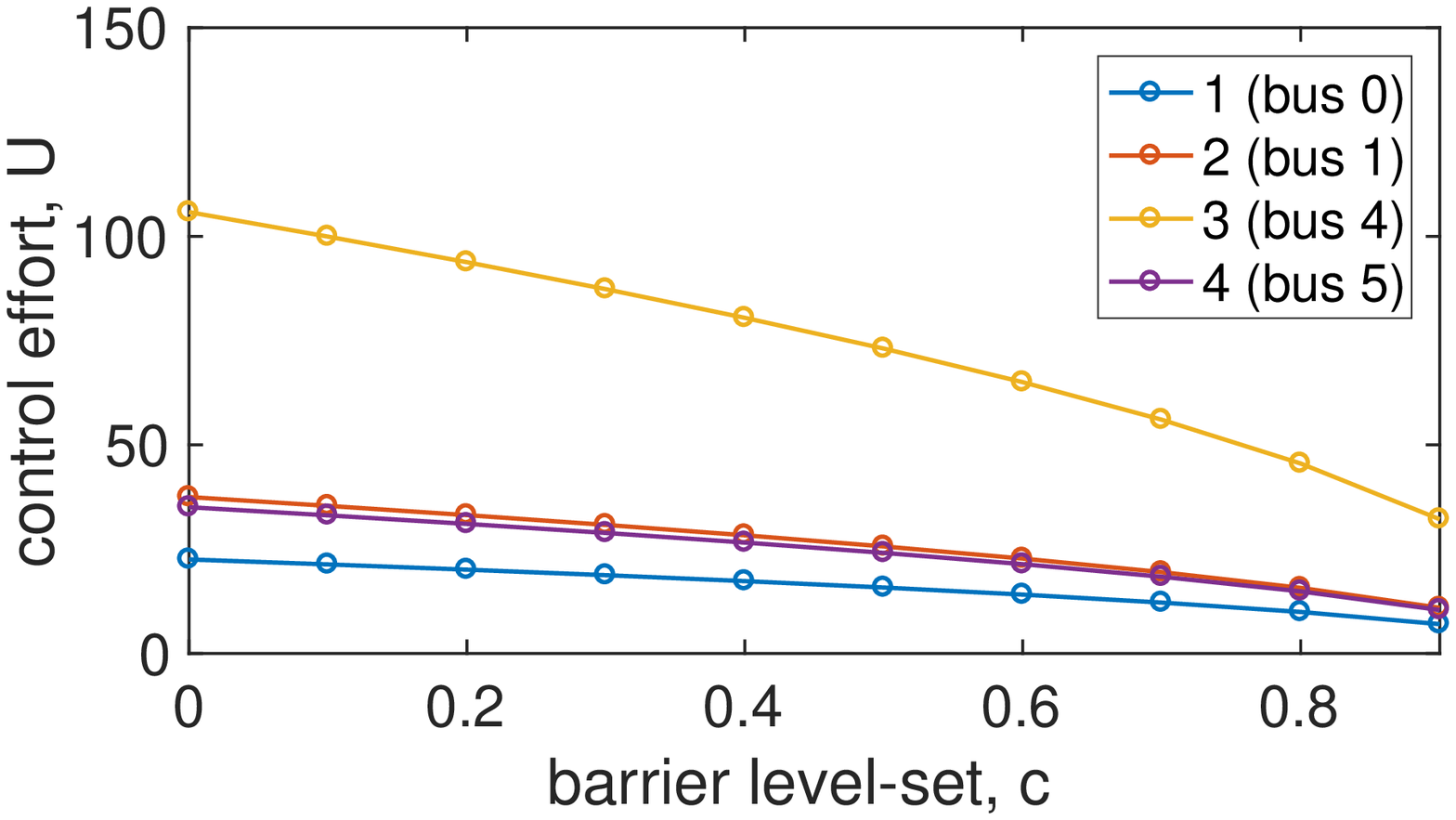}\label{F:control_dec}
}
\hspace{-0.41in}
\subfigure[]{
\includegraphics[scale=0.32]{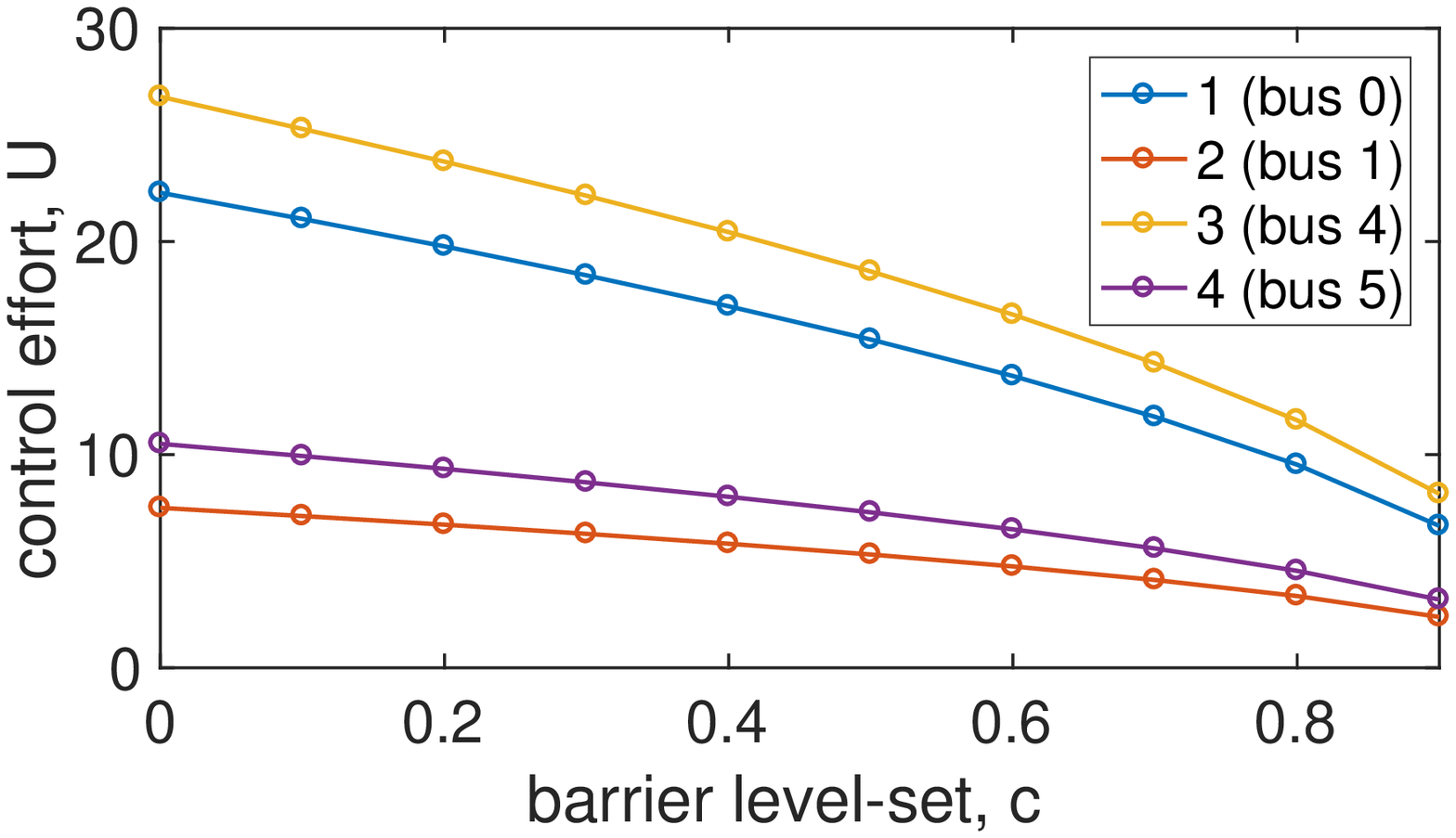}\label{F:control_dist}
}
\hspace{-0.41in}
\subfigure[]{
\includegraphics[scale=0.32]{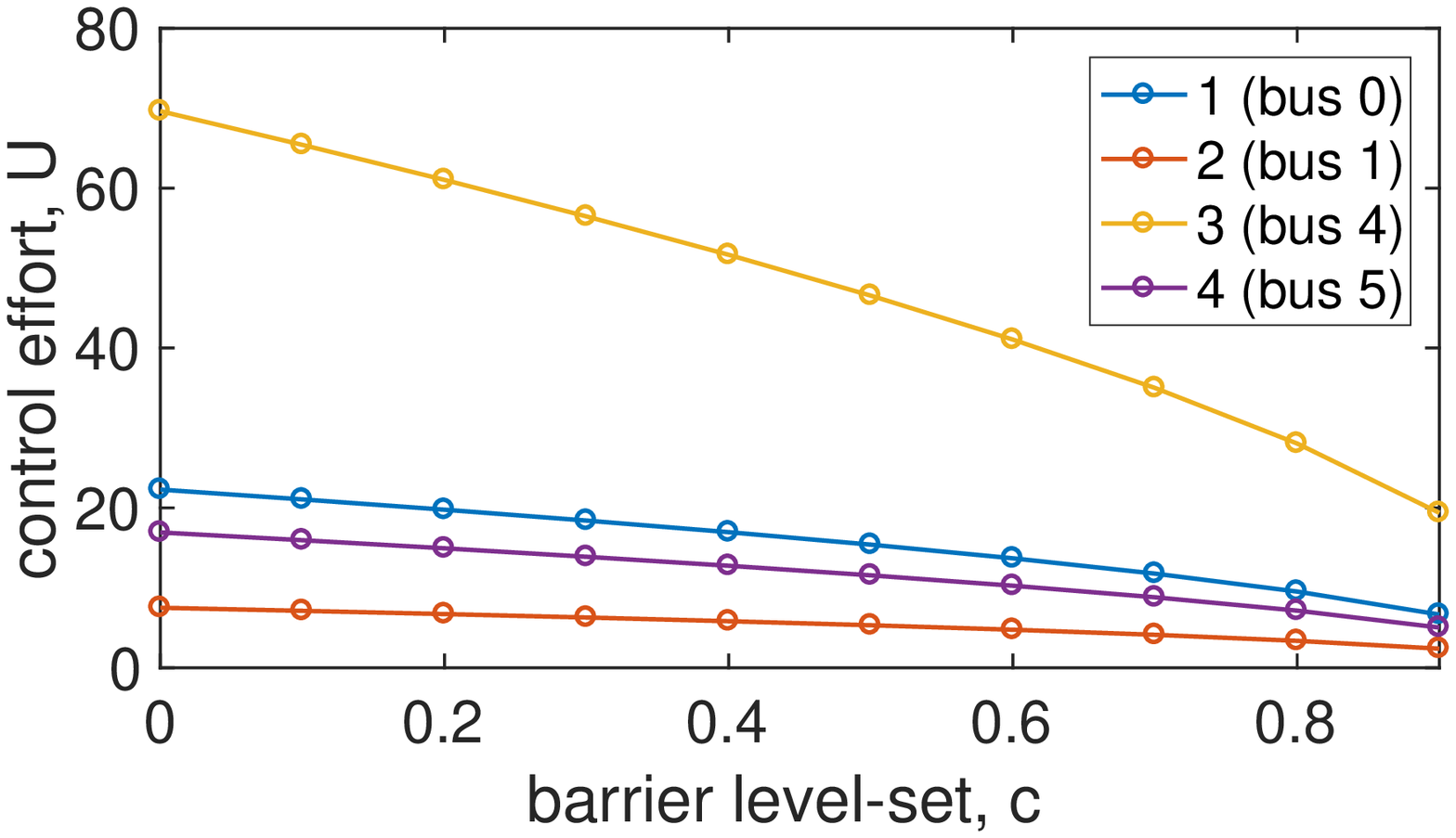}\label{F:control_d}
}\hspace{-0.41in}
\caption[Optional caption for list of figures]{Computation of the minimum control effort needed ($U_i$) to certify safety of the network via subsystem barrier functions, for varying values of $c\in[0,1)$\,, using - (a) a decentralized control policy, $u_i(x_i)$\,, that used only the subsystem's states; (b) a distributed control policy that uses all the neighbor states into the feedback, in addition to the subsystem's states; and (c) a distributed control policy that uses the neighbor voltage magnitudes into the feedback, in addition to the subsystem's states.}
\label{F:c1}
\end{figure*}
Next we investigate the control efforts needed to guarantee safety of the network over some domain defined using the subsystem barrier function level-sets. Fig.\,\ref{F:control_dec} shows the results of the optimal decentralized control design problem \eqref{E:control_sos}, for a range of different values of the barrier level-set $c\in[0,1)$ such that $B_i\geq c~\forall i$ gives a distributed certificate of safety. As expected, the control effort increases (monotonically, in this case) as the value of $c$ decreases, or the certified region of safety increases. Figs.\,\ref{F:control_dist}-\ref{F:control_d} show the results when local neighboring subsystem state measurements are used in the control design in addition to the subsystem's states, in what we call as the `distributed control' design. Clearly, distributed control policies require lower minimum control efforts as compared to the decentralized control policy. This observation aligns with the conclusion in \cite{cavraro2016value} regarding the value of communication in distribution network voltage regulation problem. In particular, two different choices of distributed controllers are explored - one in which all of the neighboring subsystems' states are used in the feedback (Fig.\,\ref{F:control_dist}) and another in which the only neighboring subsystem states used as feedback are the voltage magnitudes (Fig.\,\ref{F:control_d}). In this example, additional measurements from the neighboring subsystems help decrease the minimum control effort needed.

\vspace{-0.02in}
\section{Conclusion}
\vspace{-0.02in}
In this paper we consider the problem of safety in inverter-based microgrids. Using the barrier functions based methods, we propose a distributed safety certification of the mirogrid network. Sum-of-squares based algorithm was used to present a computational approach to obtain these safety certificates in a distributed manner. Moreover, using a microgrid example, we show how decentralized vs. distributed control policies could pose different requirements on the control effort. Future work will explore the extension of such methods to larger power systems networks, under the presence of uncertainties.


\vspace{-0.03in}
\section*{Acknowledgment}
\vspace{-0.03in}
This work was carried out at PNNL (contract DE-AC05-76RL01830) under the support from the U.S. Department of Energy as part of their Grid Modernization Initiative. 

\vspace{-0.02in}


\bibliographystyle{IEEEtran}
\bibliography{references,RefKundu,RefBarrier,RefMGStability}
%
%
%

\end{document}